\definecolor{LemonChiffon}{rgb}{100, 98, 80}
\definecolor{myblue}{rgb}{0,0.4,0.8}
\definecolor{orange}{rgb}{1, 0.4, 0}
\definecolor{mygreen}{rgb}{0, 0.8, 0.2}
\definecolor{myred}{rgb}{204, 0, 0}
\definecolor{violet}{RGB}{0.4,0.2,1}
\definecolor{brown}{rgb}{0.6, 0.4, 0}
\newtheorem{theorem}{Theorem}[section]
\newtheorem{lemma}[theorem]{Lemma}
\newtheorem{corollary}[theorem]{Corollary}
\theoremstyle{definition}
\newtheorem{definition}[theorem]{Definition}
\theoremstyle{definition}
\newtheorem{remark}[theorem]{Remark}
\newcounter{statement}
\begin{document}

\title{Structural properties of symmetric Toeplitz and Hankel matrices}
\author{Hojin Chu$^{1}$, Homoon Ryu$^{1}$ \\
 {\footnotesize $^{1}$ \textit{Department of Mathematics Education,
Seoul National University,}}\\{\footnotesize\textit{
Seoul 08826, Rep. of Korea}}\\
{\footnotesize\textit{
ghwls8775@snu.ac.kr, rhm95@snu.ac.kr}}\\
{\footnotesize}}
\date{}
\maketitle

\begin{abstract}
In this paper, we investigate properties of a symmetric Toeplitz matrix and a Hankel matrix by studying the components of its graph.
To this end, we introduce the notion of ``weighted Toeplitz graph" and ``weighted Hankel graph", which are weighted graphs whose adjacency matrix are a symmetric Toeplitz matrix and a Hankel matrix, respectively.
By studying the components of a weighted Toeplitz graph, we show that the Frobenius normal form of a symmetric Toeplitz matrix is a direct sum of symmetric irreducible Toeplitz matrices.
Similarly, by studying the components of a weighted Hankel matrix, we show that the Frobenius normal form of a Hankel matrix is a direct sum of irreducible Hankel matrices.
\end{abstract}

    \noindent
{\it Keywords.} Symmetric Toeplitz matrix; Weighted Toeplitz graph; Hankel matrix; Weighted Hankel graph; Block diagonal matrix; Graph components.

\noindent
{{{\it 2020 Mathematics Subject Classification.} 05C22, 05C50, 15B05}}


\section{Introduction}
A {\it Toeplitz matrix} is a matrix in which each descending diagonal from left to right is constant.
That is, an $n\times n$ matrix $T=(t_{i,j})_{1\le i,j\le n}$ is a Toeplitz matrix if $t_{i,j}=t_{i+1,j+1}$ for each $1 \le i,j \le n-1$.
%
As a variation of the study of Toeplitz matrices in terms of graph theory, ``Toeplitz graph" was introduced and investigated with respect to hamiltonicity by van Dal {\it et al.}~\cite{G1_van1996hamiltonian} in 1966.
They defined a Toeplitz graph as a graph whose adjacency matrix is symmetric Boolean Toeplitz.
Results regarding Toeplitz graphs can be referenced in~\cite{G_cheon2023matrix,G_euler1995characterization,G_euler2013planar,G_heuberger2002hamiltonian,G_liu2019computing,G_mojallal2022structural,G_nicoloso2014chromatic}.
In this paper, we extend Toeplitz graphs to weighted Toeplitz graphs.

A {\it Hankel matrix} is a matrix in which each descending anti-diagonal from right  to left is constant.
That is, an $n\times n$ matrix $H=(h_{i,j})_{1\le i,j\le n}$ is a Hankel matrix if $h_{i+1,j}=h_{i,j+1}$ for each $1 \le i,j \le n-1$. 
We introduce the notion of ``weighted Hankel graph" as a variation of the study of Hankel matrices.


Given two integers $m$ and $n$ with $m\le n$, let $\llbracket m,n \rrbracket$ be the set of integers between $m$ and $n$, that is,
\[ \llbracket m,n \rrbracket =\{k \in \mathbb{Z} \colon\, m \le k \le n\}. \]
We simply write $\llbracket 1,n \rrbracket$ as $[n]$.
Now, we define the concepts that are mainly used in this paper.
\begin{definition}\label{def}
	Given an $n\times n$ symmetric Toeplitz matrix 
	\[A:=\begin{bmatrix}
  a_0 & a_{1}   & a_{2} & \cdots & \cdots & a_{n-1} \\
  a_1 & a_0      & a_{1} & \ddots &        & \vdots \\
  a_2 & a_1      & \ddots & \ddots & \ddots & \vdots \\ 
 \vdots & \ddots & \ddots & \ddots & a_{1} & a_{2} \\
 \vdots &        & \ddots & a_1    & a_0    & a_{1} \\
a_{n-1} & \cdots & \cdots & a_2    & a_1    & a_0
\end{bmatrix}, \]
we may denote $A$ by $T[a_0, a_1, \ldots, a_{n-1}]$ since a symmetric Toeplitz matrix is uniquely determined by its first row.  
We let \[S_A = \{ i \in \llbracket 0,n-1 \rrbracket \colon\, a_i \neq 0 \} \]
and $w_A$ be the map from $S_A$ to $\mathbb{R}\setminus \{0\}$ defined by 
\[
w_A(s) = a_s.
\]

Let $A= T[a_0, a_1, \ldots, a_{n-1}]$. 
The edge-weighted graph $(G,w)$ is uniquely determined by $A$ in the following way:
\[V(G) = [n], \quad E(G) = \{ ij \colon\, i,j \in V(G), \ |i-j| \in S_A\},\quad  \text{and}\quad  w(ij) = w_A(|i-j|). \]
We denote this graph by $G(A)$. 
We mean by a {\it weighted Toeplitz graph} the edge-weighted graph of a symmetric Toeplitz matrix 
(see Figure~\ref{fig:wtex} for an illustration).
\end{definition}

\begin{definition}
	Given an $n\times n$ Hankel matrix 
	\[A:=\begin{bmatrix}
  a_0 & a_{1}   & a_{2} & \cdots & \cdots & a_{n-1} \\
  a_1 & a_2      &  & \iddots &   \iddots     & \vdots \\
  a_2 &       & \iddots & \iddots & \iddots & \vdots \\ 
 \vdots & \iddots & \iddots & \iddots &  & a_{2n-4} \\
 \vdots &  \iddots      & \iddots &     & a_{2n-4}    & a_{2n-3} \\
a_{n-1} & \cdots & \cdots & a_{2n-4}    & a_{2n-3}    & a_{2n-2}
\end{bmatrix}, \]
we may denote $A$ by $H[a_0, a_1, \ldots, a_{2n-2}]$ since a Hankel matrix is determined by the first row and the last column.  
We let \[T_A = \{ i \colon\, a_i \neq 0 \} \]
and $w_A$ be the map from $T_A$ to $\mathbb{R}$ defined by 
\[
w_A(t) = a_t.
\]
We simply write $T$ and $w$ instead of $T_A$ and $w_A$ if no confusion is likely. 

Let $A= H[a_0, a_1, \ldots, a_{2n-2}]$. 
The edge-weighted graph $(G,w)$ is uniquely determined by $A$ in the following way:
\[V(G) = [[0,n-1]], \quad E(G) = \{ ij \colon\, i,j \in V(G), \ i+j \in T_A\},\quad  \text{and}\quad  w(ij) = w_A(i+j). \]
We denote this graph by $G(A)$. 
A {\it weighted Hankel graph} means the edge-weighted graph of a Hankel matrix (see Figure~\ref{fig:hankel} for an illustration).
\end{definition}

In this paper, we investigate properties of symmetric Toeplitz and Hankel matrices by studying the components of their associated graphs.
We show that a component of a weighted Toeplitz graph is also a weighted Toeplitz graph and that there exists a preorder on the set of components of a weighted Toeplitz graph in terms of isomorphism to induced subgraphs.
These results imply that the Frobenius normal form of a symmetric Toeplitz matrix is a direct sum of symmetric irreducible Toeplitz matrices such that for any pair of these blocks, one is a principal submatrix of the other.
Similarly, we find that a component of a weighted Hankel graph is also a weighted Hankel graph, that is, the Frobenius normal form of a Hankel matrix is a direct sum of irreducible Hankel matrices.
However, while a specific preorder exists on the set of components of a weighted Toeplitz graph, this ordering does not extend to the components of a weighted Hankel graph.

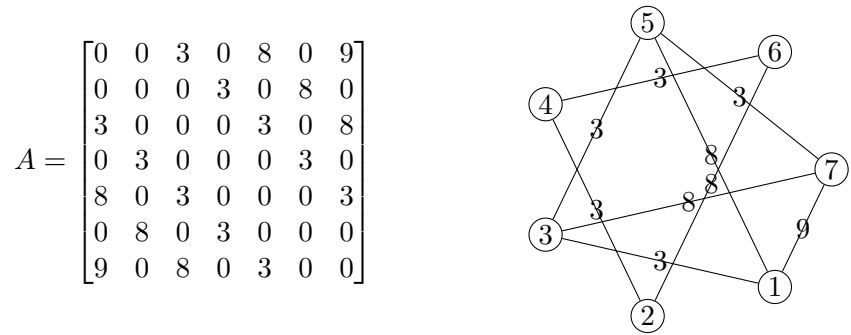
\begin{figure}
\begin{center}
\begin{equation*}
A=\begin{bmatrix}
0 & 0 & 3 & 0 & 8 & 0 & 9 \\
0 & 0 & 0 & 3 & 0 & 8 & 0 \\
3 & 0 & 0 & 0 & 3 & 0 & 8 \\
0 & 3 & 0 & 0 & 0 & 3 & 0 \\
8 & 0 & 3 & 0 & 0 & 0 & 3 \\
0 & 8 & 0 & 3 & 0 & 0 & 0 \\
9 & 0 & 8 & 0 & 3 & 0 & 0
\end{bmatrix}
\hspace{2cm}
\begin{tikzpicture}[baseline={(0,0)}, scale = 1]
\tikzstyle{vertex} = [circle,draw,fill=white,inner sep=1.3];
 \foreach \i in {1,...,7}{
      \node[vertex] (\i) at (-360*\i/7:2) {$\i$};
      }
\draw[black,>=stealth]  (1) -- (3) node[midway, ] {$3$};
\draw[black,>=stealth]  (2) -- (4) node[midway,] {$3$};
\draw[black,>=stealth]  (3) -- (5) node[midway, ] {$3$};
\draw[black,>=stealth]  (4) -- (6) node[midway, ] {$3$};
\draw[black,>=stealth]  (5) -- (7) node[midway, ] {$3$};
\draw[black,>=stealth]  (1) -- (5) node[midway, ] {$8$};
\draw[black,>=stealth]  (2) -- (6) node[midway, ] {$8$};
\draw[black,>=stealth]  (3) -- (7) node[midway, ] {$8$};
\draw[black,>=stealth]  (1) -- (7) node[midway, ] {$9$};

\end{tikzpicture}
\end{equation*}
\caption{The symmetric Toeplitz matrix $A= T[0,0,3,0,8,0,9]$ and its graph}\label{fig:wtex}
\end{center}
\end{figure}

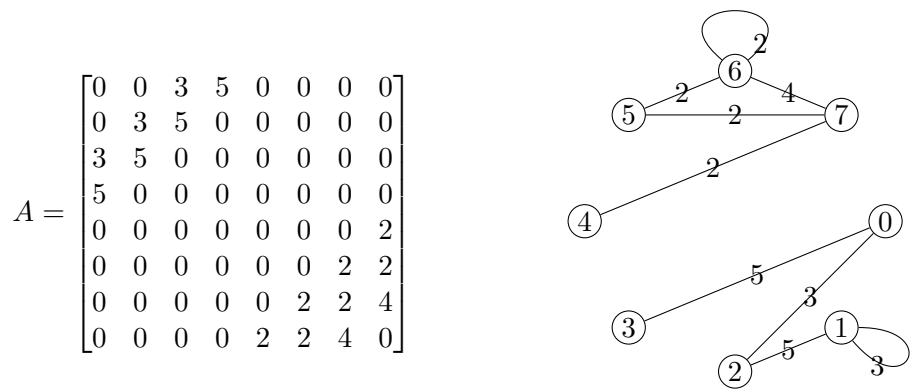
\begin{figure}
\begin{center}
\begin{equation*}
A=\begin{bmatrix}
0 & 0 & 3 & 5 & 0 & 0 & 0 & 0\\
0 & 3 & 5 & 0 & 0 & 0 & 0 & 0\\
3 & 5 & 0 & 0 & 0 & 0 & 0 & 0\\
5 & 0 & 0 & 0 & 0 & 0 & 0 & 0\\
0 & 0 & 0 & 0 & 0 & 0 & 0 & 2\\
0 & 0 & 0 & 0 & 0 & 0 & 2 & 2\\
0 & 0 & 0 & 0 & 0 & 2 & 2 & 4\\
0 & 0 & 0 & 0 & 2 & 2 & 4 & 0
\end{bmatrix}
\hspace{2cm}
\begin{tikzpicture}[baseline={(0,0)}, scale = 1]
\tikzstyle{vertex} = [circle,draw,fill=white,inner sep=1.3];
 \foreach \i in {0,...,7}{
      \node[vertex] (\i) at (-360*\i/8:2) {$\i$};
      }
\draw[black,>=stealth]  (0) -- (2) node[midway, ] {$3$};
\draw[black,>=stealth]  (0) -- (3) node[midway,] {$5$};
\draw[black,>=stealth]  (1) edge [out=0,in=-50,looseness=15] (1) node[below right=7] {$3$} ;
\draw[black,>=stealth]  (1) -- (2) node[midway, ] {$5$};
\draw[black,>=stealth]  (4) -- (7) node[midway, ] {$2$};
\draw[black,>=stealth]  (5) -- (6) node[midway, ] {$2$};
\draw[black,>=stealth]  (5) -- (7) node[midway, ] {$2$};
\draw[black,>=stealth]  (6) edge [out=40,in=140,looseness=10] (6) node[above right=3 ] {$2$};
\draw[black,>=stealth]  (6) -- (7) node[midway, ] {$4$};

\end{tikzpicture}
\end{equation*}
\caption{The Hankel matrix $A= H[0,3,0,5,0,0,0,0,0,0,0,2,2,4,0]$ and its graph}\label{fig:hankel}
\end{center}
\end{figure}

\section{Preliminaries}

For graph-theoretical terms and notations not defined in this paper, we follow \cite{bondy2010graph}.

The \emph{adjacency matrix} of a weighted graph $(G,w)$ is a square matrix $A$ of size $n \times n$ such that its element $a_{ij}$ is equal to the weight of an edge joining $v_i$ and $v_j$ when it exists, and zero when there is no edge joining $v_i$ and $v_j$. 

A square matrix $A$ is called \emph{reducible} if by a simultaneous permutation of its lines we can obtain a matrix of the form
\[
\begin{bmatrix}
A_{1} & A_{12} \\
O & A_{2}
\end{bmatrix}
\]
where $A_1$ and $A_2$ are square matrices of order at least one. 
If $A$ is not reducible, then $A$ is called \emph{irreducible}. 
The following theorem gives a relation between irreducibility of a matrix and its graph. 

\begin{theorem}\label{thm:irreducible_connect}
Let $A$ be a square matrix of order $n$. 
Then $A$ is irreducible if and only if the (weighted) graph having adjacency matrix $A$ is connected. 
\end{theorem}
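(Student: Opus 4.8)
The plan is to establish the two implications separately, in each case translating the block-triangular description of reducibility into a statement about how the vertex set of the associated graph splits. Throughout I write $G$ for the weighted graph with adjacency matrix $A$ and identify $V(G)$ with the index set $[n]$ of the rows and columns of $A$.

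First I would prove the contrapositive of the ``only if'' direction: if $G$ is disconnected, then $A$ is reducible. Since $G$ is disconnected, its vertex set splits as $V(G) = U \sqcup W$ with $U, W \neq \varnothing$ and no edge joining a vertex of $U$ to a vertex of $W$. Let $\sigma$ be any permutation of $[n]$ listing the indices in $U$ before those in $W$, and let $P$ be the corresponding permutation matrix. By the definition of the adjacency matrix, the absence of edges between $U$ and $W$ forces $a_{ij} = 0$ whenever $i$ and $j$ lie in different parts; hence $P A P^{\top}$ has the block form $\left[\begin{smallmatrix} A_U & O \\ O & A_W \end{smallmatrix}\right]$ with $A_U, A_W$ square of order at least one. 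This is the reducible form with off-diagonal block equal to $O$, so $A$ is reducible.

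Conversely, I would prove the contrapositive of the ``if'' direction: if $A$ is reducible, then $G$ is disconnected. Reducibility yields a permutation matrix $P$ with $P A P^{\top} = \left[\begin{smallmatrix} A_1 & A_{12} \\ O & A_2 \end{smallmatrix}\right]$, where $A_1$ is $k \times k$ and $A_2$ is $(n-k) \times (n-k)$ for some $1 \le k \le n-1$. Let $W$ index the $A_1$-block and $U$ index the $A_2$-block. The lower-left zero block says $a_{ij} = 0$ for every $i \in U$ and $j \in W$; since $A$ is the adjacency matrix of an (undirected) weighted graph it is symmetric, so the entries $a_{ji}$ with $j \in W$, $i \in U$ in the upper-right block $A_{12}$ vanish as well. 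Thus there is no edge between $U$ and $W$, and as both are nonempty, $G$ is disconnected.

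The single point that requires care — and the only real obstacle — is the use of symmetry in the converse. For a general square matrix, a one-sided zero block guarantees only that no arc runs from the $A_2$-vertices to the $A_1$-vertices, i.e.\ that the associated directed graph fails to be strongly connected, while the underlying undirected graph may still be connected. Because the adjacency matrices considered here are symmetric, taking the transpose forces the complementary block to vanish too, and the undirected notion of connectedness is exactly the correct one. I would therefore make the symmetry of $A$ explicit at this step, remarking that for non-symmetric $A$ one should read ``connected'' as ``strongly connected'' for the directed graph.
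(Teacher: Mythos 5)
Your proof is correct. Note that the paper states this theorem without proof, treating it as a standard fact from the theory of nonnegative matrices and graphs, so there is no in-paper argument to compare against; your two contrapositives are exactly the standard argument one would supply. You also correctly identify the one genuine subtlety: as literally stated for an arbitrary square matrix, the equivalence holds with \emph{strongly connected digraphs}, and the undirected version requires symmetry of $A$ (which is automatic here, since the paper only ever forms undirected weighted graphs from symmetric Toeplitz and Hankel matrices, whose adjacency matrices are symmetric by definition). Making the use of $a_{ij}=a_{ji}$ explicit in the converse direction, as you do, is precisely the care the statement needs.
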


Let $A$ be a square matrix of order $n$. 
Then there exists a permutation matrix $P$ of order $n$ and an integer $t \ge 1$ such that 
\begin{equation}\label{eq:PNF}
P A P^T = 
\begin{pmatrix}
A_1 & A_{12} & \cdots & A_{1t} \\
O & A_2 & \cdots &A_{2t} \\
\vdots & \vdots & \ddots & \vdots \\
O & O & \cdots & A_t
\end{pmatrix}
\end{equation}
where $A_1, A_2, \ldots, A_t$ are square irreducible matrices. 
The matrices in equation~\eqref{eq:PNF} that occur as diagonal blocks are uniquely determined up to simultaneous permutation of their lines. 
The form in equation~\eqref{eq:PNF} is called the \emph{Frobenius normal form} of the square matrix $A$. 
If $A$ is symmetric, then its Frobenius normal form is as follows:
\begin{equation}\label{eq:SYM}
P A P^T = 
\begin{pmatrix}
A_1 & O & \cdots &O \\
O & A_2 & \cdots &O \\
\vdots & \vdots & \ddots & \vdots \\
O & O & \cdots & A_t
\end{pmatrix}
\end{equation}
where $A_1, A_2, \ldots, A_t$ are symmetric irreducible matrices. 

Given a weighted graph $(G,w)$, a maximal connected subgraph of $(G,w)$ is called a \emph{component} of $(G,w)$.
Then, by Theorem~\ref{thm:irreducible_connect}, it is well-known that each of $A_1, A_2, \ldots, A_t$ in equation~\ref{eq:SYM} equals the adjacency matrix of a component of the graph $(G,w)$ whose adjacency matrix is $A$.

\section{The Frobenius normal form of a symmetric Toeplitz matrix}

\begin{lemma}\label{lem:diff}
Let $A = T[a_0, \ldots, a_{n-1}]$. 
Suppose that there are vertices $u_1$, $u_2$, $v_1$, and $v_2$ in a component $C$ of $G(A)$ such that $u_1-v_1=u_2-v_2  \in S_A$.
Then \[\left|V(C) \cap \llbracket v_1,u_1 \rrbracket \right| = \left|V(C)\cap \llbracket v_2,u_2 \rrbracket \right|.\] 
\end{lemma}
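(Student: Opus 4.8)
The plan is to reduce the statement to a counting identity governed by a periodicity property of membership in $C$. Set $d := u_1 - v_1 = u_2 - v_2$. Since $d \in S_A \subseteq \llbracket 0, n-1 \rrbracket$ we have $d \ge 0$ and $u_i \ge v_i$, so both intervals are well defined; the case $d = 0$ is immediate, as then each interval is a single vertex lying in $C$. So I would assume $d \ge 1$ from here on.

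The heart of the argument is the following periodicity claim, which I would prove first: for vertices $x, y \in [n]$ with $x \equiv y \pmod d$, one has $x \in V(C)$ if and only if $y \in V(C)$. Indeed, writing (say) $y = x + kd$ with $k \ge 1$, the vertices $x, x+d, x+2d, \dots, x+kd = y$ all lie in $[n]$, and consecutive ones differ by $d \in S_A$, hence are adjacent in $G(A)$ by the definition of a weighted Toeplitz graph. Thus $x$ and $y$ lie in a common component, so $x \in V(C)$ iff $y \in V(C)$. Consequently, whether or not a vertex $x \in [n]$ belongs to $V(C)$ depends only on the residue of $x$ modulo $d$.

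Given this, the count is routine bookkeeping. An interval of $d+1$ consecutive integers, such as $\llbracket v_1, u_1 \rrbracket = \llbracket v_1, v_1 + d \rrbracket$, meets every residue class modulo $d$ exactly once, except the class of its congruent endpoints $v_1 \equiv v_1 + d$, which it meets twice. Hence $\left|V(C) \cap \llbracket v_1, u_1\rrbracket\right|$ equals the number of residue classes modulo $d$ that meet $V(C)$, plus an extra $1$ coming from the endpoint class, the extra contribution being $1$ precisely because $v_1 \in V(C)$. The identical description holds for $\llbracket v_2, u_2 \rrbracket$ with endpoint class that of $v_2$, and since $v_2 \in V(C)$ the extra contribution is again $1$. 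The remaining term, namely the number of residue classes meeting $V(C)$, depends only on $C$ and $d$ and not on the chosen interval, so the two cardinalities coincide.

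The step I expect to be the main obstacle, or rather the one requiring the right idea, is the periodicity claim. The tempting move is to translate a walk joining a vertex $k$ to $v_1$ by $t := v_2 - v_1$ in order to produce a walk joining $k+t$ to $v_2$, but such a translate may leave $[n]$. The clean fix is to avoid translating walks altogether and instead observe that membership in $C$ is invariant along a single arithmetic progression of common difference $d$, which never leaves $[n]$ between its endpoints. Once that invariant is in hand, analyzing how a length-$(d+1)$ window distributes over the residues modulo $d$ finishes the proof.
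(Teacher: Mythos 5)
Your proposal is correct, and it runs on the same engine as the paper's proof: the observation that $x$ and $x+kd$ are joined by the walk $x,\, x+d,\, \ldots,\, x+kd$, each step having difference $d \in S_A$, so that membership in a component is transported along arithmetic progressions of difference $d$ that stay inside $[n]$ (this is exactly the well-definedness step in the paper, and you rightly flagged that translating an arbitrary walk by $v_2-v_1$ could exit $[n]$, which is why both arguments walk along the progression instead). Where you genuinely diverge is in how this engine is deployed. The paper compares the two intervals directly: after a WLOG $u_2 \ge u_1$, it sends each $a \in V(C) \cap \llbracket v_1, u_1-1 \rrbracket$ to the unique $b \in \llbracket v_2, u_2-1 \rrbracket$ with $d \mid b-a$, uses the walk to show $b \in V(C)$, gets injectivity from $|a_1-a_2| \le d-1$, repeats symmetrically for the reverse inequality, and finally adds $1$ for the endpoints. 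You instead promote the walk observation to a global structural claim --- $V(C)$ is a union of full residue classes modulo $d$ intersected with $[n]$ --- and then count each closed interval of length $d$ against the common invariant $|R|+1$, where $R$ is the set of residues whose classes meet $V(C)$ and the $+1$ comes from $v_i \in V(C)$ putting the doubled endpoint class inside $R$. Your route buys a stronger intermediate statement, dispenses with the WLOG and the two-injection symmetry (in effect your two residue-counting bijections compose to give the paper's map), and treats $d=0$ explicitly, a case the paper's proof covers only vacuously; the paper's version is more local and constructs the matching between the two intervals directly. Both are complete and correct.
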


\begin{proof}
	Without loss of generality, assume $u_2 \ge u_1$.
	Take $a \in V(C) \cap \llbracket v_1,u_1-1 \rrbracket$.
	Since $u_1-v_1=u_2-v_2=:s$, there is a unique integer $b$ in $\llbracket v_2,u_2-1 \rrbracket$ such that $s \mid b-a$.
Thus, since $u_2 \ge u_1$, $b = a+ks$ for some nonnegative integer $k$. 
Since $s\in S_A$, there is a walk $a(a+s) \cdots (a+ks) = b$ in $G(A)$.
	Thus $a$ and $b$ are in the same component $C$.
	Then $b \in V(C) \cap \llbracket v_2,u_2-1 \rrbracket$.
	Therefore $a\mapsto b$ is a function from $V(C) \cap \llbracket v_1, u_1-1 \rrbracket$ to $V(C) \cap \llbracket v_2,u_2-1\rrbracket$.
Suppose $a_1 \mapsto b$ and $a_2 \mapsto b$.
Then $s \mid b-a_1$ and $s\mid b-a_2$ and so $s \mid a_1-a_2$.
Since $|a_1-a_2| \le s-1$, $a_1 = a_2$. 
Thus the given function is injective. 
Therefore 
\[|V(C) \cap \llbracket v_1, u_1-1\rrbracket| \le |V(C) \cap \llbracket v_2,u_2-1\rrbracket|. \]
	By a similar argument, one may show that there is an injective function from $V(C) \cap \llbracket v_2,u_2-1\rrbracket$ to $V(C) \cap \llbracket v_1, u_1-1\rrbracket $ and so
	$|V(C) \cap \llbracket v_1, u_1-1\rrbracket| \ge |V(C) \cap \llbracket v_2,u_2-1\rrbracket|$. 
	Thus 	\[|V(C) \cap \llbracket v_1, u_1-1\rrbracket| = |V(C) \cap \llbracket v_2,u_2-1\rrbracket|. \]
	Since $u_1$ and $u_2$ are in $C$,
\[|V(C) \cap \llbracket v_1, u_1\rrbracket|=|V(C) \cap \llbracket v_1, u_1-1\rrbracket|+1=|V(C) \cap \llbracket v_2,u_2-1\rrbracket|+1=|V(C) \cap \llbracket v_2,u_2 \rrbracket |.  \qedhere \]
\end{proof}

The vertex set of a component of a weighted Toeplitz graph may not have consecutive labels of the vertices. 
When the vertex set of a component is labeled as $n_1<n_2< \cdots< n_k$, relabeling it as $1, 2, \ldots, k$ is called {\it normalized labeling}.
Normalized labeling can be considered as a bijective map.

\begin{theorem}\label{thm:sqz}
Each component of a weighted Toeplitz graph is isomorphic to a weighted Toeplitz graph under the normalized labeling of its vertex set.
\end{theorem}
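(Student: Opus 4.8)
The plan is to fix a component $C$ of the weighted Toeplitz graph $G(A)$, write its vertex set as $V(C)=\{n_1<n_2<\cdots<n_k\}$, and let $\phi\colon V(C)\to[k]$ be the order-preserving bijection $\phi(n_i)=i$ coming from the normalized labeling. To prove the statement I would exhibit real numbers $b_0,b_1,\ldots,b_{k-1}$ so that $\phi$ is a weight-preserving isomorphism from $C$ onto $G(T[b_0,\ldots,b_{k-1}])$. Since $\phi$ is already an order-preserving bijection, this amounts to showing that the weight $a_{|u-v|}$ of a pair $u,v\in V(C)$ (with the convention that $a_{|u-v|}=0$ records a non-edge) depends only on the normalized distance $|\phi(u)-\phi(v)|$. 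Throughout I would use the elementary closure property of components: if $v\in V(C)$, $s\in S_A$ and $v+s\in[n]$, then $v(v+s)$ is an edge of $G(A)$ and hence $v+s\in V(C)$ (and symmetrically for $v-s$).

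The heart of the argument is the following converse to Lemma~\ref{lem:diff}. Fix a normalized distance $d\in[k-1]$ and suppose that some pair $v<u$ in $V(C)$ with $\phi(u)-\phi(v)=d$ happens to be an edge, say $u-v=s\in S_A$. By Lemma~\ref{lem:diff}, every edge of $C$ at distance $s$ spans exactly $d+1$ vertices of $C$, that is, has normalized distance $d$. I would then claim conversely that every pair $v'<u'$ in $V(C)$ with $\phi(u')-\phi(v')=d$ must again satisfy $u'-v'=s$. To see this I would try to ``slide $s$ into the pair'': if $u'-s\ge 1$, then $(u'-s)\,u'$ is an edge at distance $s$ by closure, so by Lemma~\ref{lem:diff} it also has normalized distance $d$, whence $\phi(u'-s)=\phi(u')-d=\phi(v')$ and injectivity of $\phi$ gives $u'-s=v'$; symmetrically, if $v'+s\le n$ one slides upward from $v'$.

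The step I expect to be the main obstacle is the boundary case in which neither slide is available, namely $u'\le s$ and $v'>n-s$. I would rule this case out entirely. It forces $s>n/2$ and places both $v'$ and $u'$ in the ``middle zone'' $\llbracket n-s+1,\,s\rrbracket$. On the other hand, every distance-$s$ edge $v_1(v_1+s)$ has $v_1\le n-s$ and $v_1+s\ge s+1$, so its span $\llbracket v_1,v_1+s\rrbracket$ properly contains $\llbracket v',u'\rrbracket$, the endpoint $v_1$ lying strictly below $v'$. Hence $V(C)\cap\llbracket v',u'\rrbracket$ is a proper subset of $V(C)\cap\llbracket v_1,v_1+s\rrbracket$, contradicting the fact that both sets have size $d+1$. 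This monotonicity-of-spans contradiction is what makes the converse claim hold without exception; it is the delicate point, since here the naive extension arguments both fail and one must instead compare interval lengths against the ambient size $n$.

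With the converse claim in hand the conclusion is immediate. For each $d\in[k-1]$ I would set $b_d=a_s$ if some pair at normalized distance $d$ is an edge of distance $s$ (this is well defined: the claim shows all such pairs share the same distance $s$, hence the same weight, and it precludes two different distances occurring at the same normalized distance or an edge and a non-edge coexisting there), and $b_d=0$ otherwise; I also set $b_0=a_0$ to record the common diagonal entry. Then two vertices of $C$ are adjacent with weight $b_d$ precisely when their normalized distance equals $d$, so $\phi$ carries $C$ isomorphically onto the weighted Toeplitz graph $G(T[b_0,b_1,\ldots,b_{k-1}])$, as required.
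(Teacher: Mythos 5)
Your proposal is correct and follows essentially the same route as the paper: both hinge on Lemma~\ref{lem:diff} to show that the actual difference $s$ of an edge determines its normalized distance, both use the ``slide by $s$'' closure argument for the converse, and both then transport the weights $a_s$ to the parameters of the new symmetric Toeplitz matrix. The only divergence is local and harmless: where you exclude the boundary case in which neither slide stays inside $[n]$ via a span-containment counting contradiction, the paper sidesteps it by comparing positions against a witnessing edge $(v,v+s)$ (from $\psi(u)-\psi(v)=a-b$ one of the two slides is always in range), and both resolutions are valid.
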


\begin{proof}
Let $A = T[a_0, \ldots, a_{n-1}]$. 
Take a component $C$ of $G(A)$.
Let $\psi:V(C) \rightarrow [t]$ be the normalized labeling of $C$ where $t = |V(C)|$, and let 
\[ D = \{s \in S_A \colon\, \exists u,v \in V(C) \text{ s.t. } u-v = s\}.\]
We claim that $\psi$ is an isomorphism which naturally induces a weighted Toeplitz graph $G(B)$ for some symmetric Toeplitz matrix $B=T[b_0,\ldots, b_{t-1}]$.
To determine $b_0, \ldots, b_{t-1}$ from $\psi$, 
we consider the function 
\[\varphi: D \rightarrow \mathbb{Z}\] 
sending $s \in D$ to $\psi(v+s)-\psi(v)$ for some vertex $v$ in $C$ with $v+s$ belonging to $C$. 
We will show that $\varphi$ is well-defined. 
Take $s \in D$.
First, we note that there is a vertex $v$ in $C$ with $v+s$ belonging to $C$ by the definition of $D$. 
Further, $|V(C)\cap \llbracket v,v+s \rrbracket|$ means the number of vertices in $C$ between $v$ and $v+s$ (including $v$ and $v+s$) when the vertices are arranged in increasing order.
Therefore, by the definition of normalized labeling, \[\psi(v+s)-\psi(v) = |V(C)\cap \llbracket v,v+s\rrbracket|-1\] for any $v \in V(C)$ with $v+s$ belonging to $C$.
Thus, by Lemma~\ref{lem:diff}, 
\[
\psi(v_1+s) - \psi(v_1) = |V(C)\cap \llbracket v_1,v_1+s\rrbracket|-1 = |V(C)\cap \llbracket v_2,v_2+s\rrbracket|-1 = \psi(v_2+s) - \psi(v_2)
\]
for any vertices $v_1$ and $v_2$ in $C$ with both $v_1+s$ and $v_2+s$ belonging to $C$.
Consequently, we have shown that $\varphi$ is well-defined. 

To show the injectivity of $\varphi$, suppose that $\varphi(s_1) = \varphi(s_2)$ for some $s_1, s_2 \in D$ with $s_1 \le s_2$.
Since $s_2 \in D$, there is a vertex $v \in V(C)$ such that $v+s_2 \in V(C)$. 
Then $v+s_1 \le v+s_2$ and so $v$ and $v+s_1$ are adjacent.
Thus $v+s_1 \in V(C)$. 
By the definition of $\varphi$, $\psi(v+s_i)-\psi(v) = \varphi(s_i)$ for each $i = 1, 2$. 
Then 
\[
\psi(v+s_1)-\psi(v) = \varphi(s_1) = \varphi(s_2) = \psi(v+s_2) - \psi(v)
\] by the assumption.  
Thus $\psi(v+s_1) = \psi(v+s_2)$.
Since a normalized labeling is bijective, $v+s_1=v+s_2$ and so $s_1=s_2$.
Therefore $\varphi$ is injective.

Now, we will show that $\psi$ induces an isomorphism from $C$ to $G(B)$ where $B:=T[b_0, \ldots, b_{t-1}]$ such that
\[b_i=\begin{cases}
	0 & \text{if $i \notin \varphi(D)$;} \\
	a_s & \text{if $i= \varphi(s)$ for some $s\in D$.} 
\end{cases} \]
Then $S_B= \{ i \in \llbracket 0, t-1 \rrbracket \colon\, b_i \neq 0 \} = \{\varphi (s) \colon\, s \in D\} =\varphi(D)$.
Therefore \[S_B = \varphi(D).\]

Take an edge $uv$ in $C$. 
Without loss of generality, we may assume $u \ge v$.
Then $u-v = s$ for some $s \in D$.
By the definition of $\varphi$, $\psi(u)-\psi(v) = \varphi(s) \in \varphi(D) = S_B$.
Thus there is an edge $\psi(u)\psi(v)$ in $G(B)$ with weight $b_{\varphi(s)}$ by the definition of a weighted Toeplitz graph. 
Since $b_{\varphi(s)}=a_s$, the weight of $\psi(u)\psi(v)$ and the weight of $uv$ are the same.

Take an edge $ab$ in $G(B)$ with $a \ge b$.
Then there exists $s \in D$ such that $a-b = \varphi(s)$. 
We wish to show that $\psi^{-1}(a) -\psi^{-1}(b) = s$. 
Since $s\in D$, there exist some vertices $u$ and $v$ in $C$ satisfying $u-v=s$.
Then $\psi(u)-\psi(v) = \varphi(s)$ by the definition of $\varphi$. 
Therefore $\psi(u)-\psi(v) = a-b$ and so $\psi(u)-a = \psi(v)-b$.
Thus either $\psi(u) \le a$ or $\psi (v) > b$. 

We first suppose $\psi(u) \le a$. 
By the definition of normalized labeling, $u \le \psi^{-1}(a)$ and so $v=u-s \le \psi^{-1}(a) - s$. 
Thus $\psi^{-1}(a) -s$ is a vertex in $G(A)$.
Since $\psi^{-1}(a) \in V(C)$ and $s\in D \subseteq S_A$, $\psi^{-1}(a)-s \in V(C)$. 
Since $s= \psi^{-1}(a) - (\psi^{-1}(a)-s)$, 
\[\varphi(s) = a-\psi(\psi^{-1}(a)-s)\] by the definition of $\varphi$.
Therefore, since $a-b=\varphi(s)$, $\psi(\psi^{-1}(a)-s) = a - \varphi(s) = b$ and so $\psi^{-1}(a)-s = \psi^{-1}(b)$.
Hence $\psi^{-1}(a)-\psi^{-1}(b) = s$ and $\psi^{-1}(a)\psi^{-1}(b)$ is an edge of $C$.

Now, suppose $\psi(v) > b$. 
Then $v > \psi^{-1}(b)$ and so $u =v+s> \psi^{-1}(b) +s$. 
Thus $\psi^{-1}(b)+s$ is a vertex in $G(A)$.
Since $\psi^{-1}(b) \in V(C)$ and $s\in D \subseteq S_A$,  $\psi^{-1}(b)+s \in V(C)$. 
Since $s = (\psi^{-1}(b)+s)- \psi^{-1}(b)$, \[\varphi(s) = \psi(\psi^{-1}(b)+s) - b\] by the definition of $\varphi$. 
Therefore $\psi(\psi^{-1}(b)+s) = b + \varphi(s) = a$ and so $\psi^{-1}(b) + s = \psi^{-1}(a)$. 
Hence $\psi^{-1}(a)-\psi^{-1}(b) = s$ and $\psi^{-1}(a)\psi^{-1}(b)$ is an edge of $C$.

For both cases,  $\psi^{-1}(a)\psi^{-1}(b)$ is an edge of $C$ satisfying $\psi^{-1}(a)-\psi^{-1}(b) = s$. 
Moreover, since $a_s = b_{\varphi(s)}$, the weight of $\psi^{-1}(a)\psi^{-1}(b)$ in $C$ equals the weight of $ab$ in $G(B)$.  

We have shown that $\psi$ induces an isomorphism from $C$ to the weighted Toeplitz graph $G(B)$. 
Since $C$ was arbitrarily chosen, each of components of $G$ is isomorphic to a weighted Toeplitz graph under the normalized labeling of its vertex set. 
\end{proof}

Let $A$ be a symmetric Toeplitz matrix of order $n$.
By the definition of a weighted Toeplitz graph, for any vertices $u$ and $v$ in $G(A)$ and any integer $i$ with $\{u+i, v+i\} \subseteq [n]$, in $G(A)$, $u$ and $v$ are adjacent if and only if $u+i$ and $v+i$ are adjacent.
Further, the edge $(u+i)(v+i)$ has the same weight as the edge $uv$.
Thus we obtain the following lemma.

\begin{lemma}\label{lem:translate}
Let $A = T[a_0, \ldots, a_{n-1}]$ and $C$ be a component of $G(A)$.
Then, for any integer $i$ with $T_C(i):=\{v+i \colon\, v\in V(C) \} \subseteq [n]$, the vertices in $T_C(i)$ form a subgraph of $G(A)$ isomorphic to $C$.
\end{lemma}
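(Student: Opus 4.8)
The plan is to exhibit the translation map itself as the required weight-preserving isomorphism. Fix an integer $i$ with $T_C(i) \subseteq [n]$ and define $\phi \colon V(C) \to T_C(i)$ by $\phi(v) = v+i$. Since translation by a fixed integer is injective and $T_C(i)$ is by definition the image of $V(C)$ under this translation, $\phi$ is a bijection onto $T_C(i)$. It then remains to verify that $\phi$ preserves adjacency, non-adjacency, and edge weights between $C$ and the subgraph of $G(A)$ induced by $T_C(i)$.

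First I would record that both $C$ and the subgraph induced by $T_C(i)$ inherit their edges directly from $G(A)$. Because $C$ is a component, and hence an induced subgraph, two vertices $u,v \in V(C)$ are joined in $C$ exactly when $uv \in E(G(A))$; likewise two vertices $x,y \in T_C(i)$ are joined in the induced subgraph exactly when $xy \in E(G(A))$. Thus the whole question reduces to comparing adjacency in $G(A)$ of the pair $\{u,v\}$ with that of the translated pair $\{u+i, v+i\}$, for $u,v \in V(C)$. Note that $\{u+i, v+i\} \subseteq T_C(i) \subseteq [n]$ by hypothesis, so the translated pair indeed consists of vertices of $G(A)$.

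The comparison is exactly the translation-invariance observation recorded immediately before the statement: for $u,v \in V(C)$ the vertices $u$ and $v$ are adjacent in $G(A)$ if and only if $u+i$ and $v+i$ are, and in that case the edge $(u+i)(v+i)$ carries the same weight as $uv$. Concretely this holds because $|(u+i)-(v+i)| = |u-v|$, so the two pairs test membership of the same value in $S_A$ and read off the same weight $w_A(|u-v|)$; in particular the argument also covers the case $u=v$, accounting for any loops when $0 \in S_A$. Combining this with the previous paragraph gives $uv \in E(C)$ if and only if $\phi(u)\phi(v)$ is an edge of the induced subgraph, with equal weights, so $\phi$ is a weight-preserving isomorphism from $C$ onto the subgraph induced by $T_C(i)$.

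Since the entire argument rests only on the identity $|(u+i)-(v+i)| = |u-v|$ together with the translation-invariance remark already established, there is no substantial obstacle. The one point deserving care is the justification that both $C$ and the subgraph induced by $T_C(i)$ are induced subgraphs of $G(A)$, so that adjacency within each is governed entirely by the rule $|x-y| \in S_A$ defining $G(A)$; this is what lets me transport edges back and forth through $\phi$ without losing or gaining any.
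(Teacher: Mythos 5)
Your proof is correct and matches the paper's reasoning: the paper justifies this lemma solely by the translation-invariance observation stated immediately before it (adjacency and weight depend only on $|u-v|$, which is unchanged under $v \mapsto v+i$), which is exactly the content of your argument via the map $\phi(v)=v+i$. Your added care about both $C$ and the translated vertex set inducing their edges from $G(A)$ is a reasonable explicit elaboration of what the paper leaves implicit, not a different approach.
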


By the previous lemma, we may give a preorder $\lesssim$ on the set of components of a weighted Toeplitz graph as follows.

\begin{theorem}\label{thm:supercomp}
Let $A = T[a_0, \ldots, a_{n-1}]$. 
In addition, let $C_1, \ldots, C_k$ be the components of $G(A)$ such that $M(C_1) < M(C_2) < \cdots < M(C_k)$
where $M(C_i):=\max \{v \colon\, v\in V(C_i)\} $ for each $i \in [k]$.
Then
\[C_1 \lesssim C_2 \lesssim \cdots \lesssim C_k \]
where $G \lesssim H$ means ``$G$ is isomorphic to an induced subgraph of $H$''.
\end{theorem}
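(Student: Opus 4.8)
The plan is to establish the single step $C_i \lesssim C_{i+1}$ for each $i \in [k-1]$ and then invoke transitivity of the preorder $\lesssim$ to read off the whole chain. The engine of the argument is Lemma~\ref{lem:translate}: translating a component by an integer $d$ that keeps it inside $[n]$ produces an induced subgraph of $G(A)$ isomorphic to that component. So the task reduces to choosing the right translation amount and checking that the translated copy lands inside $C_{i+1}$.

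First I would set $d := M(C_{i+1}) - M(C_i)$, which is a positive integer since $M(C_i) < M(C_{i+1})$. I claim $T_{C_i}(d) = \{v + d \colon v \in V(C_i)\} \subseteq [n]$. Indeed, every $v \in V(C_i)$ satisfies $1 \le v \le M(C_i)$, so $1 < 1 + d \le v + d \le M(C_i) + d = M(C_{i+1}) \le n$, the last inequality holding because $M(C_{i+1})$ is a vertex of $G(A)$. Hence Lemma~\ref{lem:translate} applies, and the translation $v \mapsto v+d$ is an isomorphism from $C_i$ onto the induced subgraph $G(A)[T_{C_i}(d)]$.

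The key point is to verify that $T_{C_i}(d) \subseteq V(C_{i+1})$. The translated copy $G(A)[T_{C_i}(d)]$ is connected, being isomorphic to the connected graph $C_i$, and it contains the vertex $M(C_i) + d = M(C_{i+1})$, which belongs to $C_{i+1}$. A connected subgraph of $G(A)$ meeting $C_{i+1}$ must lie entirely within the component $C_{i+1}$, so every vertex of $T_{C_i}(d)$ is a vertex of $C_{i+1}$. Because $C_{i+1}$ is itself an induced subgraph of $G(A)$, the subgraph of $C_{i+1}$ induced by $T_{C_i}(d)$ coincides with $G(A)[T_{C_i}(d)]$, which is isomorphic to $C_i$. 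This exhibits $C_i$ as an induced subgraph of $C_{i+1}$ up to isomorphism, that is, $C_i \lesssim C_{i+1}$.

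Finally, since ``isomorphic to an induced subgraph of'' is transitive, chaining $C_1\lesssim C_2,\ C_2\lesssim C_3,\ \dots,\ C_{k-1}\lesssim C_k$ yields $C_1 \lesssim C_2 \lesssim \cdots \lesssim C_k$. The only delicate step is the third paragraph, namely pinning down which component receives the translated copy; everything else is bookkeeping on the indices. I expect this to be the main obstacle, but it is dissolved by the choice of $d$: the translation amount is engineered so that the image's maximum vertex equals $M(C_{i+1})$, and it is precisely this coincidence, together with connectedness, that forces the copy into $C_{i+1}$ rather than into another component or out of the range $[n]$.
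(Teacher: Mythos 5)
Your proof is correct and follows essentially the same route as the paper's: translating $C_i$ by $d = M(C_{i+1})-M(C_i)$ so that the image's maximum coincides with $M(C_{i+1})$, invoking Lemma~\ref{lem:translate}, and using connectedness to force the translated copy into the component $C_{i+1}$. The only cosmetic differences are that you handle consecutive pairs and chain by transitivity where the paper treats an arbitrary pair $C, C'$ with $M(C)<M(C')$ directly, and that you make explicit the containment $T_{C_i}(d)\subseteq [n]$ that the paper leaves implicit.
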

\begin{proof}
	Take two components $C$ and $C'$ such that $M(C) < M(C')$.
    We note that \[M(C') \in T_C(M(C')-M(C))=\{v+M(C')-M(C) \colon\, v\in V(C)\}.\]
    Further, the subgraph $H$ of $G(A)$ induced by $T_C(M(C')-M(C))$ is isomorphic to $C$ which is connected by Lemma~\ref{lem:translate}.
    Since $C'$ is a component and $M(C')\in V(H)$, $H$ is isomorphic to an induced subgraph of $C'$.
\end{proof}

Theorems~\ref{thm:sqz} and \ref{thm:supercomp} can be rewritten in a matrix version as follows.

\begin{theorem}\label{thm:FNF}
For a symmetric Toeplitz matrix $A$, there exists a permutation matrix $P$ such that 
\[
P^TAP = 
\begin{bmatrix}
A_1 & O & \cdots & O \\
O & A_2 & \cdots & O \\
\vdots & \vdots & \ddots & \vdots \\
O & O & \cdots & A_k
\end{bmatrix}
\]
for some positive integer $k$ and irreducible symmetric Toeplitz matrices $A_1, \ldots, A_k$ such that $A_i$ is a principal submatrix of $A_j$ if and only if $i\le j$.
\end{theorem}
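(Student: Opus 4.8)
The plan is to read the statement off the symmetric Frobenius normal form recalled in the Preliminaries, combined with Theorems~\ref{thm:sqz} and~\ref{thm:supercomp}. First I would apply the form~\eqref{eq:SYM}: there is a permutation matrix $Q$ with $QAQ^T$ block diagonal, whose diagonal blocks are the adjacency matrices of the components $C_1,\dots,C_k$ of $G(A)$. I would label the components so that $M(C_1)<\cdots<M(C_k)$ (the maxima are distinct since distinct components are vertex-disjoint), and choose $Q$ to realize the normalized labelling on each $V(C_i)$; this is legitimate because the diagonal blocks of the Frobenius normal form are determined only up to simultaneous permutation of their lines. Setting $P=Q^T$ then yields the displayed form $P^TAP=QAQ^T$.

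Next I would identify each diagonal block $A_i$ as an irreducible symmetric Toeplitz matrix. By Theorem~\ref{thm:sqz}, under the normalized labelling of $V(C_i)$ the component $C_i$ is isomorphic to a weighted Toeplitz graph $G(B_i)$ for a symmetric Toeplitz matrix $B_i$; hence the block $A_i$, being the adjacency matrix of $C_i$ in that labelling, equals $B_i$ and is symmetric Toeplitz. Since $C_i$ is connected, Theorem~\ref{thm:irreducible_connect} shows $A_i$ is irreducible. This already produces the block-diagonal decomposition into irreducible symmetric Toeplitz matrices, and it remains to establish the principal-submatrix ordering.

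For the ``if'' direction I would translate Theorem~\ref{thm:supercomp} into matrix language. Assume $i\le j$, so $C_i\lesssim C_j$. The induced subgraph of $G(A)$ witnessing this in the proof of Theorem~\ref{thm:supercomp} is the translate $T_{C_i}(M(C_j)-M(C_i))$; since a translation is order-preserving and preserves differences, the normalized Toeplitz matrix of this translate is exactly $A_i$. Because the translate is an induced subgraph of $C_j$ and the normalized labelling of $C_j$ restricts to an order-preserving bijection on its vertices, the principal submatrix of $A_j$ indexed by the images of those vertices agrees with $A_i$ entrywise, both adjacencies and weights matching since the subgraph is induced. Hence $A_i$ is a principal submatrix of $A_j$. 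I expect this faithful translation to be the main obstacle: one must verify not merely that $C_i$ is isomorphic to \emph{some} induced subgraph of $C_j$, but that the normalization identifies that subgraph's adjacency matrix with $A_i$ on the nose, so that the abstract relation $\lesssim$ becomes the concrete relation ``principal submatrix.''

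For the ``only if'' direction I would use monotonicity of sizes: the chain $C_1\lesssim\cdots\lesssim C_k$ forces $|V(C_1)|\le\cdots\le|V(C_k)|$, while ``$A_i$ is a principal submatrix of $A_j$'' requires $|V(C_i)|\le|V(C_j)|$; when the block sizes are strictly increasing this gives $i\le j$ at once. The remaining subtlety is that two distinct components can be isomorphic (for instance when $S_A=\{s\}$), so that equal blocks arise and ``$A_i$ is a principal submatrix of $A_j$'' may hold in both directions; the ``only if'' is then to be read up to this identification of coincident blocks.
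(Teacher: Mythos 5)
Your proposal is correct and follows essentially the same route as the paper: Theorem~\ref{thm:sqz} supplies the symmetric Toeplitz blocks, connectivity via Theorem~\ref{thm:irreducible_connect} gives irreducibility, and Theorem~\ref{thm:supercomp} gives the ordering, with the block-diagonalizing permutation assembled from the normalized labelings exactly as in the paper's construction of $\sigma$. Your two refinements go beyond what the paper writes and are both sound --- verifying that the order- and difference-preserving translate turns the abstract relation $\lesssim$ into a literal principal-submatrix relation, and flagging that the ``only if'' direction fails verbatim when components coincide (e.g.\ for $A=T[0,0,1,0]$ the two components both normalize to $T[0,1]$, so $A_2$ is a principal submatrix of $A_1$ although $2\not\le 1$) --- whereas the paper compresses this final equivalence into a single asserted sentence, so your reading ``up to identification of coincident blocks'' is the correct repair of a genuine imprecision in the statement.
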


\begin{proof}
Let $A=T[a_0, \ldots, a_{n-1}]$ be a symmetric Toeplitz matrix.
Let $C_1, C_2, \ldots, C_k$ be the components of $G(A)$ such that \[C_1 \lesssim C_2 \lesssim \cdots \lesssim C_k \]
by Theorem~\ref{thm:supercomp} ($\lesssim$ is as defined in Theorem~\ref{thm:supercomp}).
Then the vertex set of $G(A)$ is the disjoint union of $V(C_1), V(C_2), \ldots, V(C_k)$.
By Theorem~\ref{thm:sqz}, there exist isomorphisms $\psi_1, \psi_2, \ldots, \psi_k$ so that $\psi_i:V(C_i) \to [n_i]$ is an isomorphism from $C_i$ to a weighted Toeplitz graph $G(A_i)$ for some symmetric Toeplitz matrix $A_i$ where $n_i:=|V(C_i)|$.
Since each of $C_1, C_2, \ldots, C_k$ is connected, $A_1, A_2, \ldots, A_k$ are irreducible.
Now we define a permutation $\sigma$ on $[n]$ as follows. 
For each $v \in V(G(A))$, there exists $i \in [k]$ satisfying $v \in C_i$ and we let 
\[\sigma(v)=\sum_{j=1}^{i-1} n_j+\psi_i(v). \] 
Let $P$ be the permutation matrix representing $\sigma$. 
Then 
\[
P^TAP = 
\begin{bmatrix}
A_1 & O & \cdots & O \\
O & A_2 & \cdots & O \\
\vdots & \vdots & \ddots & \vdots \\
O & O & \cdots & A_k 
\end{bmatrix}.
\]
Since $C_1 \lesssim C_2 \lesssim \cdots \lesssim C_k$, $A_i$ is a principal submatrix of $A_j$ if and only if $i\le j$.
\end{proof}

	Let $A$ be a symmetric Toeplitz matrix.
	Then, by using a graph algorithm, all the components of $G(A)$ could be determined, which correspond to the diagonal blocks of the Frobenius normal form of $A$. 
	Thus one can derive the Frobenius normal form of $A$ as in the proof of Theorem~\ref{thm:FNF}. 
	For example, in Figure~\ref{fig:wtex}, there are two components such that one consists of the vertex set $\{2,4,6\}$ and the other consists of the vertex set $\{1,3,5,7\}$.
	Thus $k=2$ with $A_1=T[0,3,8]$ and $A_2=T[0,3,8,9]$.

\section{The Frobenius normal form of a Hankel matrix}

\begin{lemma}\label{lem:diff_H}
Let $A = H[a_0, a_1, \ldots, a_{2n-2}]$. 
Suppose that there are vertices $u_1$, $u_2$, $v_1$, and $v_2$ in a component $C$ of $G(A)$ such that $u_1+v_1=u_2+v_2  \in T_A$ and $v_2 \le v_1\le u_1 \le u_2$.
Then \[\left|V(C) \cap \llbracket u_1,u_2 \rrbracket \right| = \left|V(C)\cap \llbracket v_2,v_1 \rrbracket \right|.\] 
\end{lemma}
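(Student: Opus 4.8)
The plan is to exploit the reflection symmetry that a fixed sum induces in a Hankel graph; this is the sum-analogue of the translation used in Lemma~\ref{lem:diff}. Write $s := u_1+v_1 = u_2+v_2 \in T_A$ and consider the involution $\rho$ on the integers defined by $\rho(x) = s-x$.

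First I would record the purely combinatorial behavior of $\rho$ on intervals. Since $\rho(u_1) = s-u_1 = v_1$ and $\rho(u_2) = s-u_2 = v_2$, and $\rho$ is order-reversing, it maps the interval $\llbracket u_1,u_2 \rrbracket$ bijectively onto $\llbracket v_2,v_1 \rrbracket$; in particular these two intervals contain the same number of integers (both equal to $u_2-u_1+1 = v_1-v_2+1$).

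The heart of the argument is to show that $\rho$ preserves membership in $V(C)$. Because $u_2, v_2 \in V(C) \subseteq V(G) = \llbracket 0,n-1 \rrbracket$ and $v_2 \le v_1 \le u_1 \le u_2$, the whole interval $\llbracket v_2,u_2 \rrbracket$, and hence each of the two intervals above, lies inside $V(G)$. Thus for any $x$ in either interval, both $x$ and $\rho(x) = s-x$ are vertices of $G(A)$. Since $x + \rho(x) = s \in T_A$, the definition of a weighted Hankel graph produces an edge $x\rho(x)$ (a self-loop in the degenerate case $x = s/2$), so $x$ and $\rho(x)$ lie in the same component. Therefore $x \in V(C)$ if and only if $\rho(x) \in V(C)$.

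Combining these two observations, $\rho$ restricts to a bijection between $V(C) \cap \llbracket u_1,u_2 \rrbracket$ and $V(C) \cap \llbracket v_2,v_1 \rrbracket$, which gives the claimed equality of cardinalities. I expect this proof to be noticeably shorter than that of Lemma~\ref{lem:diff}: there a fixed difference $s$ yields a translation, so one is forced to build injections in both directions and then reconcile endpoints, whereas here a fixed sum yields an involution and a single reflection does all the work. The only points needing care are the bookkeeping that both intervals sit inside $V(G)$ (which I reduce to $u_2,v_2 \in V(C)$) and the verification that the fixed point $x = s/2$, when $s$ is even, causes no trouble since a self-loop still places a vertex in its own component.
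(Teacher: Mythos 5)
Your proof is correct and takes essentially the same route as the paper: the paper's proof is exactly the reflection $a \mapsto s-a$, shown to map $V(C) \cap \llbracket u_1,u_2 \rrbracket$ into $V(C) \cap \llbracket v_2,v_1 \rrbracket$ via the edge $a(s-a)$ arising from $a+(s-a)=s \in T_A$, with $b \mapsto s-b$ as its inverse. Your write-up merely makes explicit two points the paper leaves implicit --- that both intervals lie inside $\llbracket 0,n-1 \rrbracket$ because $u_2, v_2 \in V(C)$, and that the fixed point $x = s/2$ is harmless --- which is fine but not a different argument.
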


\begin{proof}
	Take $a \in V(C) \cap \llbracket u_1,u_2 \rrbracket$.
	Since $s:=u_1+v_1=u_2+v_2 \in T_A$, $s-a \in \llbracket v_2,v_1 \rrbracket$ and it is adjacent to $a$.
	Thus $s-a \in V(C) \cap \llbracket v_2,v_1 \rrbracket$.
	Therefore $a\mapsto (s-a)$ is a function $f$ from $V(C) \cap \llbracket u_1, u_2 \rrbracket$ to $V(C) \cap \llbracket v_2,v_1 \rrbracket$.
	It is easy to check that $b \mapsto (s-b)$ from $V(C) \cap \llbracket v_2,v_1 \rrbracket$ to $V(C) \cap \llbracket u_1, u_2 \rrbracket$ is the inverse function of $f$.
	Thus \[\left|V(C) \cap \llbracket u_1,u_2 \rrbracket \right| = \left|V(C)\cap \llbracket v_2,v_1 \rrbracket \right|. \qedhere \]
\end{proof}

\begin{theorem}\label{thm:sqz_H}
Each component of a weighted Hankel graph is isomorphic to a weighted Hankel graph under the normalized labeling of its vertex set.
\end{theorem}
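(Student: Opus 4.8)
The plan is to mimic the proof of Theorem~\ref{thm:sqz}, replacing the difference structure governing weighted Toeplitz graphs by the sum structure governing weighted Hankel graphs and using Lemma~\ref{lem:diff_H} in place of Lemma~\ref{lem:diff}. Fix a component $C$ of $G(A)$ with $|V(C)|=t$, write $V(C)=\{w_0<w_1<\cdots<w_{t-1}\}$, and let $\psi\colon V(C)\to\llbracket 0,t-1\rrbracket$ be the normalized labeling $\psi(w_i)=i$ (taken $0$-indexed to match the convention $V(G(B))=\llbracket 0,t-1\rrbracket$ for a Hankel matrix $B$). Since the edges of $G(B)$ are decided by the sum $i+j$, the natural invariant sends each \emph{value-sum} realized by an edge of $C$ to the corresponding \emph{index-sum}. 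Concretely, set $D=\{s\in T_A\colon \exists\,u,v\in V(C),\ u+v=s\}$ and define $\varphi\colon D\to\mathbb{Z}$ by $\varphi(s)=\psi(u)+\psi(v)$ for some edge $uv$ of $C$ with $u+v=s$. First I would verify that $\varphi$ is well-defined: for $u\le v$ in $V(C)$ one has $\psi(v)-\psi(u)=|V(C)\cap\llbracket u,v\rrbracket|-1$, so applying Lemma~\ref{lem:diff_H} to two edges with the same value-sum $s$ gives $\psi(u_1)+\psi(v_1)=\psi(u_2)+\psi(v_2)$; hence $\varphi(s)$ is independent of the chosen edge, and clearly $\varphi(s)\in\llbracket 0,2t-2\rrbracket$.

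The heart of the argument is the following claim, which simultaneously encodes injectivity of $\varphi$ and the fact that being an edge is decided by the index-sum alone: \emph{if $\{w_a,w_b\}$ is an edge of $C$ with value-sum $s$ and $\{w_c,w_d\}$ is any pair of vertices of $C$ with $c+d=a+b$, then $w_c+w_d=s$} (so $\{w_c,w_d\}$ is again an edge, of the same weight $a_s$). To prove it I would use that $\varphi(s)=a+b$ and then locate, among $w_c$ and $w_d$, a vertex whose $s$-complement is a legal vertex. Writing $I=\llbracket \max(0,s-(n-1)),\,\min(n-1,s)\rrbracket$ for the set of $x\in\llbracket 0,n-1\rrbracket$ with $s-x\in\llbracket 0,n-1\rrbracket$, both $w_a$ and $w_b$ lie in $I$, and a short case check — both of $w_c,w_d$ below $I$ forces $c+d<a+b$, both above forces $c+d>a+b$, while one below and one above would require simultaneously $s>n-1$ and $s<n-1$ — rules out the possibility that $w_c$ and $w_d$ both lie outside $I$. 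Hence, say, $w_c\in I$, so $s-w_c\in\llbracket 0,n-1\rrbracket$; since $s\in T_A$ this makes $\{w_c,s-w_c\}$ an edge, whence $s-w_c\in V(C)$, and well-definedness of $\varphi$ forces $\psi(s-w_c)=(a+b)-c=d$, i.e. $s-w_c=w_d$. This range argument is the step I expect to be the main obstacle: unlike the translation-invariant Toeplitz setting, in a Hankel graph the complement map $x\mapsto s-x$ is only partially defined on $\llbracket 0,n-1\rrbracket$, so the real content is to show that at least one endpoint of every index-sum-equivalent pair survives it.

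With the claim in hand the conclusion is routine. I would set $T_B=\varphi(D)$ and define $B=H[b_0,\ldots,b_{2t-2}]$ by $b_p=a_s$ whenever $p=\varphi(s)$ for some $s\in D$ — unambiguous by the claim, which shows every pair with index-sum $\varphi(s)$ has value-sum $s$ — and $b_p=0$ otherwise. Then for $u,v\in V(C)$, the pair $\{u,v\}$ is an edge of $C$ if and only if $\psi(u)+\psi(v)\in T_B$, with matching weights: the forward direction is the definition of $T_B$, and the backward direction is exactly the claim applied to an edge realizing the index-sum $\psi(u)+\psi(v)$. Therefore $\psi$ is an isomorphism from $C$ onto the weighted Hankel graph $G(B)$, and since $C$ was arbitrary, the theorem follows.
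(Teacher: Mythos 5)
Your proposal is correct and follows essentially the same route as the paper's proof: the same normalized labeling $\psi$, the same sum-transfer map $\varphi$ (well-defined via Lemma~\ref{lem:diff_H} exactly as the paper does it), and the same construction of the Hankel matrix $B$ from $\varphi$. The only difference is organizational: you fold the paper's two separate case analyses (injectivity of $\varphi$, and the backward edge direction, each handled there by sandwiching vertices between a known edge pair, e.g.\ $v \le \psi^{-1}(b) \le \psi^{-1}(a) \le t-v$) into one unified claim proved by your interval-$I$ range argument, which is a valid and arguably cleaner way of showing that the partially defined complement map $x \mapsto s-x$ applies to at least one endpoint of every index-sum-equivalent pair.
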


\begin{proof}
Let $A = H[a_0, a_1, \ldots, a_{2n-2}]$. 
Take a component $C$ of $G(A)$.
Let $\psi:V(C) \rightarrow \llbracket 0, k-1 \rrbracket$ be the normalizing labeling of $C$ where $k = |V(C)|$, and 
\[ S = \{t \in T_A \colon\, \exists u,v \in V(C) \text{ s.t. } u+v = t\}.\]
We claim that $\psi$ is an isomorphism which naturally induces a weighted Hankel graph $G(B)$ for some Hankel matrix $B=H[b_0,\ldots, b_{2k-2}]$.
To determine $b_0, \ldots, b_{2k-2}$ from $\psi$, 
we consider the function 
\[\varphi: S \rightarrow \mathbb{N}\] 
sending $t \in S$ to $\psi(v)+\psi(t-v)$ for some vertex $v$ with $v$ and $t-v$ both belonging to $C$. 
We will show that $\varphi$ is well-defined. 
First, we note that for each $t \in S$, there is a vertex $v$ with $v$ and $t-v$ both belonging to $C$ by the definition of $T$ and $v\le t-v$. 
Suppose that there are two vertices $u$ and $v$ such that $u$, $v$, $t-u$, and $t-v$ are belonging to $C$ for some $t\in S$.
Without loss of generality, we may assume that $u \le v \le t-v \le t-u$.
Since $\psi(a)-\psi(b)$ means the number of vertices between $a$ and $b$ for some $a,b \in V(C)$ with $b \le a$, that is, $|V(C) \cap \llbracket b,a\rrbracket|$,
\begin{align*}
	\psi(t-u)-\psi(v)&=(\psi(t-u)-\psi(t-v))-(\psi(v)-\psi(u))+(\psi(t-v)-\psi(u)) \\ 
	&= |V(C)\cap \llbracket t-v,t-u\rrbracket|+|V(C)\cap \llbracket u,v\rrbracket|+(\psi(t-v)-\psi(u)) \\
	&=\psi(t-v)-\psi(u),
	\end{align*}
where the last equality holds by Lemma~\ref{lem:diff_H}.
Thus $\psi(u)+\psi(t-u)=\psi(v)+\psi(t-v)$.
Consequently, we have shown that $\varphi$ is well-defined. 

To show the injectivity of $\varphi$, suppose that $\varphi(t_1) = \varphi(t_2)$ for some $t_1, t_2 \in S$ with $t_1 \le t_2$.
Since $t_i \in S$, there is a vertex $v_i \in V(D)$ such that $v_i \le t_i-v_i$ and $t_i-v_i \in V(C)$ for each $t=1,2$.
To the contrary, suppose that $v_2 > t_1$.
Then
\[0\le v_1 \le t_1-v_1 \le t_1 <v_2 \le t_2-v_2\le n-1. \]
Thus, by the definition of normalizing labeling, 
\[\psi(v_1) \le \psi(t_1-v_2) < \psi(v_2) \le \psi(t_2-v_2) \]
and so 
\[\varphi(t_1)=\psi(v_1)+\psi(t_1-v_1)< \psi(v_2)+\psi(t_2-v_2)=\varphi(t_2), \]
which contradicts the assumption.
Thus $v_2 \le t_1$.
Then, since $t_2-v_2 \in V(C) \subseteq V(G(A))$, $0\le t_1-v_2 \le t_2-v_2 \le n-1$ and so $t_1-v_2 \in V(G(A))$.
Then
\[
\psi(v_2)+\psi(t_1-v_2) = \varphi(t_1) = \varphi(t_2) = \psi(v) + \psi(t_2-v_2)
\] by the assumption.
Thus $\psi(t_1-v_2)=\psi(t_2-v_2)$.
Since a normalizing labeling is bijective, $t_1-v_2=t_2-v_2$.
Therefore $t_1=t_2$.
Hence $\varphi$ is injective.

Now, we will show that $\psi$ induces an isomorphism from $C$ to $G(B)$ where $B:=H[b_0, \ldots, b_{2k-2}]$ such that
\[b_i=\begin{cases}
	0 & \text{if $i \notin \varphi(S)$;} \\
	a_s & \text{if $i= \varphi(t)$ for some $t\in S$.} 
\end{cases} \]
Then $T_B= \{ i \in \llbracket 0, 2k-2 \rrbracket \colon\, b_i \neq 0 \} = \{\varphi (t) \colon\, t \in S, a_t \neq 0\} =\varphi(S)$.
Therefore \[T_B = \varphi(S).\]

Take an edge $uv$ in $C$. 
Without loss of generality, we may assume $u \ge v$.
Then $u+v = t$ for some $t \in S$.
By the definition of $\varphi$, $\psi(u)+\psi(v) = \varphi(t) \in \varphi(S) = T_B$.
Thus there is an edge $\psi(u)\psi(v)$ in $G(B)$ with weight $b_{\varphi(t)}$ by the definition of a weighted Hankel graph. 
Since $b_{\varphi(t)}=a_t$, the weight of $\psi(u)\psi(v)$ and the weight of $uv$ are the same.

Take an edge $ab$ in $G(B)$ with $a \ge b$.
Then there exists $t \in S$ such that $a+b = \varphi(t)$. 
We wish to show that $\psi^{-1}(a) + \psi^{-1}(b) = t$.
Since $t\in S$, there exist some vertices $v$ and $t-v$ in $C$ satisfying $v \le t-v$.
Then $\psi(v)+\psi(t-v) = \varphi(t)$ by the definition of $\varphi$. 
Therefore $\psi(v)+\psi(t-v) = a+b$.
Thus $\psi(v)   \le b \le a \le \psi(t-v)$ or $b\le \psi(v) \le \psi (t-v)\le a$. 
Suppose that $\psi(v)   \le b \le a \le \psi(t-v)$ (resp.\ $b\le \psi(v) \le \psi (t-v)\le a$).
Then, by the definition of normalizing labeling, 
$v   \le \psi^{-1}(b) \le \psi^{-1}(a) \le t-v$ (resp.\ $\psi^{-1}(b)\le v \le t-v \le \psi^{-1}(a)$).
Thus $0\le (t-v)-\psi^{-1}(a) \le t- \psi^{-1}(a) \le t$ (resp.\ $0\le v-\psi^{-1}(b) \le t- \psi^{-1}(b) \le t$).
Since $\psi^{-1}(a) \in V(C)$ (resp.\ $\psi^{-1}(v) \in V(C)$), $t- \psi^{-1}(a) \in V(C)$ (resp.\ $t- \psi^{-1}(b)\in V(C)$).
Since $t=\psi^{-1}(a)+(t-\psi^{-1}(a))$ (resp.\ $t=\psi^{-1}(b)+(t-\psi^{-1}(b))$),
\[ \varphi(t)=a+\psi(t- \psi^{-1}(a)) \quad (resp.\ \varphi(t)=b+\psi(t- \psi^{-1}(b))) \]
by the definition of $\varphi$.
Therefore $\psi(t- \psi^{-1}(a))=\varphi(t)-a=b$ (resp.\  $\psi(t- \psi^{-1}(b))=\varphi(t)-b=a$) and so $t- \psi^{-1}(a)=\psi^{-1}(b)$ (resp.\ $t- \psi^{-1}(b)=\psi^{-1}(a)$).
Hence $\psi^{-1}(a) + \psi^{-1}(b) = t$ and $\psi^{-1}(a)\psi^{-1}(b)$ is an edge of $C$. 
Moreover, since $a_s = b_{\varphi(s)}$, the weight of $\psi^{-1}(a)\psi^{-1}(b)$ in $C$ equals the weight of $ab$ in $G(B)$.

We have shown that $\psi$ induces an isomorphism from $C$ to the weighted Hankel graph $G(B)$. 
Since $C$ was arbitrarily chosen, each of components of $G$ is isomorphic to a weighted Hankel graph under the normalized labeling of its vertex set. 
\end{proof}

\begin{corollary}
For a Hankel matrix $A$, there exists a permutation matrix $P$ such that 
\[
P^TAP = 
\begin{bmatrix}
A_1 & O & \cdots & O \\
O & A_2 & \cdots & O \\
\vdots & \vdots & \ddots & \vdots \\
O & O & \cdots & A_k
\end{bmatrix}
\]
for some positive integer $k$ and irreducible Hankel matrices $A_1, \ldots, A_k$
\end{corollary}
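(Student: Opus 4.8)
The plan is to mirror the proof of Theorem~\ref{thm:FNF}, replacing Theorem~\ref{thm:sqz} with its Hankel counterpart Theorem~\ref{thm:sqz_H} and simply omitting the final clause about the preorder, which the paper has already observed does not survive the passage to Hankel graphs. Write $A = H[a_0, a_1, \ldots, a_{2n-2}]$. Since every entry of a Hankel matrix depends only on the sum of its two indices, $A$ is symmetric, so its Frobenius normal form takes the block-diagonal shape of~\eqref{eq:SYM}; it therefore suffices to identify the diagonal blocks with irreducible Hankel matrices.

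First I would let $C_1, C_2, \ldots, C_k$ be the components of $G(A)$, so that $V(G(A)) = \llbracket 0, n-1 \rrbracket$ is the disjoint union of $V(C_1), \ldots, V(C_k)$. By Theorem~\ref{thm:sqz_H}, for each $i$ the normalized labeling $\psi_i : V(C_i) \to \llbracket 0, n_i - 1 \rrbracket$, where $n_i := |V(C_i)|$, is an isomorphism from $C_i$ onto a weighted Hankel graph $G(A_i)$ for some Hankel matrix $A_i$. Because each $C_i$ is connected, Theorem~\ref{thm:irreducible_connect} guarantees that each $A_i$ is irreducible.

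Next I would assemble the permutation that realizes the block-diagonal form by listing the vertices of $G(A)$ component by component --- first all of $V(C_1)$, then all of $V(C_2)$, and so on --- and within each $V(C_i)$ in the order prescribed by $\psi_i$. Concretely, for the unique $i$ with $v \in V(C_i)$ one sets $\sigma(v) = \sum_{j=1}^{i-1} n_j + \psi_i(v) + 1$ so that $\sigma$ is a permutation of $[n]$, exactly as in the Toeplitz argument. Letting $P$ be the permutation matrix of $\sigma$, the entries of $A$ within a single component reproduce $A_i$ on the corresponding diagonal block, while the symmetry of $A$ forces every off-diagonal block to vanish, yielding $P^TAP$ in the stated block-diagonal form with each $A_i$ an irreducible Hankel matrix.

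I expect no real obstacle beyond bookkeeping, since the substantive content is entirely packaged in Theorem~\ref{thm:sqz_H}. The one point to be careful about is \emph{not} to assert any principal-submatrix relation among the blocks: unlike the Toeplitz setting of Theorem~\ref{thm:supercomp}, there is no Hankel analogue of Lemma~\ref{lem:translate}, because translating a vertex set by a constant does not preserve the adjacency relation defined by $i+j \in T_A$. Hence the corollary claims only the existence of the decomposition into irreducible Hankel blocks, with no ordering asserted.
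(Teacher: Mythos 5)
Your proposal is correct and follows essentially the same route as the paper's own proof: decompose $G(A)$ into components $C_1,\ldots,C_k$, apply Theorem~\ref{thm:sqz_H} to get an isomorphism $\psi_i$ from each $C_i$ onto a weighted Hankel graph $G(A_i)$ with $A_i$ irreducible by connectedness, and conjugate by the permutation listing vertices component by component. Your version is in fact slightly more careful than the paper's, since your $+1$ offset in $\sigma(v)=\sum_{j=1}^{i-1} n_j+\psi_i(v)+1$ correctly maps onto $[n]$ given that $\psi_i$ lands in $\llbracket 0, n_i-1\rrbracket$, and your remark about the absence of a Hankel analogue of Lemma~\ref{lem:translate} matches the paper's own observation in its concluding remark.
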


\begin{proof}
    Let $A=H[a_0, \ldots, a_{2n-2}]$ be a Hankel matrix.
    Let $C_1, C_2, \ldots, C_k$ be the components of $G(A)$.
    Then the vertex set of $G(A)$ is the disjoint union of $V(C_1), V(C_2), \ldots, V(C_k)$.
    By Theorem~\ref{thm:sqz_H}, there exist isomorphisms $\psi_1, \psi_2, \ldots, \psi_k$ so that $\psi_i:V(C_i) \to \llbracket 0, n_i-1 \rrbracket$ is an isomorphism from $C_i$ to a weighted Hankel graph $G(A_i)$ for some Hankel matrix $A_i$ where $n_i:=|V(C_i)|$.
    Since each of $C_1, C_2, \ldots, C_k$ is connected, $A_1, A_2, \ldots, A_k$ are irreducible.
    Now we define a permutation $\sigma$ on $[n]$ as follows. 
    For each $v \in V(G(A))$, $v \in C_i$ for some $i \in [k]$ and we let 
    \[\sigma(v)=\sum_{j=1}^{i-1} n_j+\psi_i(v). \] 
    Let $P$ be the permutation matrix representing $\sigma$. 
    Then 
    \[
    P^TAP = 
    \begin{bmatrix}
    A_1 & O & \cdots & O \\
    O & A_2 & \cdots & O \\
    \vdots & \vdots & \ddots & \vdots \\
    O & O & \cdots & A_k 
    \end{bmatrix}. \qedhere
    \]
    \end{proof}

\begin{remark}
	We showed in Theorem~\ref{thm:supercomp} that for any pair of components of a weighted Toeplitz graph, one is isomorphic to an induced subgraph of the other.
	This guarantees that for any pair of diagonal blocks in the Frobenius normal form of a symmetric Toeplitz matrix, one is a principal submatrix of the other (see Theorem~\ref{thm:FNF}).
	However, in the Frobenius normal form of a Hankel matrix, this does not hold in the same way.
	A counterexample is given in Figure~~\ref{fig:hankel}.
	The Hankel matrix given in Figure~\ref{fig:hankel} is itself in the Frobenius normal form.
	\end{remark}

\section{Acknowledgement}
This paper was completed under the guidance of professor Suh-Ryung Kim.
This work was supported by Science Research Center Program through the National Research Foundation of Korea(NRF) Grant funded by the Korean Government (MSIT)(NRF-2022R1A2C1009648).

\end{document}